\def\rr{{\mathbb R}}
\def\rn{{{\rr}^n}}
\def\cl{{\mathcal L}}
\def\az{\alpha}
\def\gz{{\gamma}}
\def\kz{{\kappa}}
\def\wz{\widetilde}
\def\ls{\lesssim}
\def\gs{\gtrsim}
\def\gfz{\genfrac{}{}{0pt}{}}
\def\r{\right}
\def\lf{\left}
\def\gfz{\genfrac{}{}{0pt}{}}
\newtheorem{thm}{Theorem}[section]
\newtheorem{lem}[thm]{Lemma}
\newtheorem{prop}[thm]{Proposition}
\newtheorem{rem}[thm]{Remark}
\newtheorem{cor}[thm]{Corollary}
\newtheorem{defn}[thm]{Definition}
\def\glip{{\mathop\mathrm{GLip}}}
\numberwithin{equation}{section}
\begin{document}

\arraycolsep=1pt

\title{\bf On the global Gaussian Lipschitz space
\footnotetext{\hspace{-0.22cm}2010
{\it Mathematics Subject Classification}. %
Primary 26A16; Secondary 28C20, 46E35. %
\endgraf {\it Key words and phrases}. %
Gauss measure space, Lipschitz space, Ornstein-Uhlenbeck Poisson kernel.%
\endgraf The first author is supported by
National Natural Science Foundation of China
(Grant No. 11471042).
}}
\author{Liguang Liu and Peter Sj\"ogren}
\date{13 April, 2015}
\maketitle

\begin{center}
\begin{minipage}{13.5cm}\small
 \noindent{\bf Abstract.}
A Lipschitz space is defined in the Ornstein-Uhlenbeck setting, by means of a
bound for the gradient of the Ornstein-Uhlenbeck Poisson integral. This space
is then characterized with a  Lipschitz-type continuity condition. These
functions turn out to have at most logarithmic growth at infinity. The
analogous  Lipschitz space containing only bounded functions was introduced
by Gatto and Urbina and has been characterized by the authors in
 \cite{LS}.
\end{minipage}
\end{center}



\section{Introduction and main result}

Consider the Euclidean space $\rn$ endowed with the Gaussian measure
$\gz$, given by
$$d\gz(x)=\pi^{-n/2}e^{-|x|^2}.$$
The Gaussian analogue of the Euclidean Laplacian is
the \emph{Ornstein-Uhlenbeck operator}
$$\cl =-\frac12 \Delta +x\cdot\nabla ,$$
where $\nabla =  (\partial_{x_1}, \dots, \partial_{x_n}).$
The  heat semigroup generated by $\cl$
 and defined  in  $L^2(\gz)$
 is the so-called \emph{Ornstein-Uhlenbeck semigroup}
$$T_t = e^{-t\cl},\quad t\ge0.$$
The \emph{Ornstein-Uhlenbeck Poisson semigroup} $P_t = e^{-t\sqrt{-\cl}}, \;t\ge0$,  can be defined
 from $\{T_t\}_{t\ge0}$ by subordination  as
\begin{equation*}
  P_tf(x)=\frac1{\sqrt \pi}\int_0^\infty \frac{e^{-u}}{\sqrt u}
  \,T_{t^2/(4u)}f(x)\,du,\qquad x\in\rn,
\end{equation*}
for  $f\in L^2(\gz)$. As explained in Section 2, $P_tf$ is given by integration against a kernel
 $P_t(x,y)$.

Via $\{P_t\}_{t\ge0}$, Gatto and Urbina \cite{GU} introduced the \emph{Gaussian Lipschitz space} $\glip_\az$ for all $\az>0$. We shall always have $\az\in(0,1)$.
Then the definition says that a function $f$ in $\rn$
is  in
$\glip_\az$ if it is
bounded and satisfies
\begin{equation}\label{GLip}
\|t\partial_t P_tf\|_{L^\infty}\le A t^\az,\qquad t>0,
\end{equation}
for some $A>0$.
 These spaces and also Gaussian Besov spaces were studied
 in a series of works; see  \cite{GPU,GU, PU} and also
the authors' paper \cite{LS}.

In \cite{LS}, the authors characterized  $\mathrm{GLip}_\az,\;0<\az<1$,
in terms of a   Lipschitz-type continuity  condition. Indeed, Theorem~1.1 of \cite{LS}
says that $f\in \mathrm{GLip}_\az$ if and only if
there exists a positive constant $K$ such that
\begin{equation}\label{strong-lip}
|f(x)-f(y)|\le K \min\lf\{|x-y|^\az,\;\,
\lf(\frac{|x-y_x|}{1+|x|}\r)^{\frac\az2} +|y_x^\prime|^{\az}\,
\r\},
\qquad x,\:y\in\rn.
\end{equation}
Here and in what follows,  we use a decomposition of $y$ as  $y=y_x+y_x^\prime$,
where $y_x$ is parallel to $x$ and $y_x^\prime $  orthogonal to $x$;
however, if  $x=0$ or $n=1$, we let $y_x=y$ and $y_x^\prime=0$.

As is well known, a condition analogous to  \eqref{GLip} for the standard
Poisson integral characterizes the ordinary Lipschitz space; see
 \cite[Sect. V.4]{S1}. If only bounded functions are considered, one obtains
the inhomogeneous  Lipschitz space, and without the boundedness assumption the
larger, homogeneous  Lipschitz space.

In our setting, we shall see that the condition   \eqref{GLip} without
the boundedness condition defines a Gaussian analogue of the homogeneous
Lipschitz space. Since here no homogeneity is involved, we shall call it
the \emph{global Gaussian Lipschitz space}.

In   \eqref{GLip}, an a priori assumption is needed to assure that  $P_t f$
 exists. Here we apply a recent result
by Garrig\'os,  Harzstein, Signes, Torrea and Viviani
\cite{GHSTV}. Clearly, a measurable function  $f$ in $\rn$ has a
well-defined  Gaussian Poisson integral if
\begin{equation*}
\int P_t(x,y) |f(y)|\,dy<\infty,
\end{equation*}
for all $x\in \rn$ and $t>0$. Theorem 1.1
of \cite{GHSTV} says that this  is equivalent
to the growth condition
\begin{equation}\label{cond:GHSTV}
\int_{\rn} \frac{e^{-|y|^2}}{\sqrt{\ln(e+|y|)}}\,|f(y)|\,dy<\infty.
\end{equation}
Moreover,  \eqref{cond:GHSTV} ensures that  $P_t f(x) \to f(x)$ as $t\to 0$
for a.a. $x\in \rn$.

We can now define the global Gaussian Lipschitz space.

\begin{defn}
Let $\az\in(0,1)$. A measurable function  $f$ defined in $\rn$ and
 satisfying \eqref{cond:GHSTV} belongs to
the global Gaussian Lipschitz space $\mathrm{GGLip}_\az$ if
\eqref{GLip} holds.
The corresponding norm is
 $$
\|f\|_{\mathrm{GGLip}_\az}
=\inf\{A>0:\, A \,\;\textup{satisfies}\,\;   \eqref{GLip}\}.
$$
\end{defn}

Strictly speaking, this space consists of functions modulo constants.
A natural question is now what continuity condition characterizes this space.
To state the answer, we start in one dimension and introduce a distance by
\begin{equation}\label{d}
d(x,y) = \left|  \int_{x}^y \frac{d \xi} {1+|\xi|}\right|, \qquad x, y \in \rr.
\end{equation}
Then
\begin{equation*}
d(x,y) = \left| \ln(1+ |x|) - \mathrm{sgn}\, xy \;  \ln(1+ |y|)  \right|
\end{equation*}
for all $x, y \in \rr$, provided we define $\mathrm{sgn}\, 0= 1$.
In several dimensions, we use this distance on the  line spanned by $x$,
defining
\begin{equation*}
d(x,y_x) = \left| \ln(1+ |x|) - \mathrm{sgn} \langle x,y\rangle   \ln(1+ |y_x|)  \right|, \qquad x, y \in \rn,
\end{equation*}
with $y_x$ as before.

Our result reads as follows.

\begin{thm}\label{thm1.2}
Let $\az\in(0,1)$ and let  $f$ be a measurable function  in $\rn$. The following are
equivalent:
\begin{enumerate}
\item[\rm (i)] $f$ satisfies \eqref{cond:GHSTV} and $f\in \rm{GGLip}_\az$;
\item[\rm (ii)]  There exists a positive constant $K$ such that
\begin{equation}\label{eq:gglip}
|f(x)-f(y)|
\le K\min\lf\{
|x-y|^\alpha,\; d(x, y_x)^{\frac{\az} 2}
+|y_x'|^\alpha
\r\},
\qquad x,\:y\in\rn,
\end{equation}
 after correction of $f$ on a null set.
\end{enumerate}
Moreover,
\begin{equation}\label{norm}
\|f\|_{\mathrm{GGLip}_\az} \simeq
\inf\{K:\, K\,\;\textup{satisfies}\,\; \eqref{eq:gglip}\}.
\end{equation}
\end{thm}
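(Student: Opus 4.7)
My plan follows the scheme of the authors' earlier paper \cite{LS}, modified to accommodate functions of logarithmic growth instead of bounded functions. The direction (ii) $\Rightarrow$ (i) is the simpler one. Substituting $y=0$ into \eqref{eq:gglip} gives $|f(x)| \le |f(0)| + K(\ln(1+|x|))^{\az/2}$, so \eqref{cond:GHSTV} is automatically satisfied. Since $P_t 1 = 1$, the kernel $t\partial_t P_t(x,\cdot)$ integrates to zero, so
$$
t\partial_t P_tf(x) = \int_{\rn} t\,\partial_t P_t(x,y)\,\bigl(f(y)-f(x)\bigr)\,dy.
$$
I would insert the pointwise bound from \eqref{eq:gglip}, split the region of integration in $y$ according to the geometry of the Ornstein-Uhlenbeck Poisson kernel, and combine the detailed kernel estimates of \cite{LS} to conclude $\|t\partial_t P_tf\|_\infty \lesssim Kt^\az$.

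For the harder direction (i) $\Rightarrow$ (ii), I would first invoke \cite{GHSTV} for the a.e. convergence $P_sf(x)\to f(x)$ as $s\to0$, and observe that the logarithmic growth allowed by \eqref{cond:GHSTV} guarantees $f\in L^1(\gz)$, so that $P_sf(x)$ converges to the constant $\int f\,d\gz$ as $s\to\infty$ by ergodicity of the Ornstein-Uhlenbeck semigroup. Together these yield, modulo an additive constant, the representation
$$
f(x)-f(y) = -\int_0^\infty\bigl[\partial_s P_sf(x)-\partial_s P_sf(y)\bigr]\,ds.
$$
The Euclidean bound $|f(x)-f(y)|\le K|x-y|^\az$ is then obtained by a classical splitting at $s = |x-y|$: for $s<|x-y|$ use $\|\partial_sP_sf\|_\infty\le A s^{\az-1}$, and for $s>|x-y|$ a gradient estimate of the form $\|\nabla_x\partial_s P_sf\|_\infty \lesssim A s^{\az-2}$ derived from the semigroup identity $\partial_s P_s = P_{s/2}\,\partial_{s/2}P_{s/2}$ and a standard first-order bound on $P_{s/2}$ acting on bounded functions.

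For the radial bound, the triangle inequality reduces matters to controlling $|f(x)-f(y_x)|$, since the transverse piece $|f(y_x)-f(y)|$ is absorbed into $K|y_x'|^\az$ via the Euclidean bound just obtained. Adapting the one-dimensional strategy of \cite{LS}, I would partition the segment from $y_x$ to $x$ along the line through the origin by break-points at which $d$ increments by a fixed amount, and at each break-point $\xi$ choose the time parameter $t\sim(1+|\xi|)^{-1}$ adapted to the local Gaussian scale. The bound for each increment would combine $|f(\xi)-P_tf(\xi)|\lesssim K t^\az$ with a radial derivative estimate on the Poisson kernel whose effective scaling gives an exponent $\az/2$, rather than $\az$, in the radial direction. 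Summing over the break-points and using \eqref{d} yields $|f(x)-f(y_x)|\lesssim K d(x,y_x)^{\az/2}$.

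The main technical obstacle is precisely this half-exponent $\az/2$ in the radial direction. It reflects a genuine anisotropy of the Ornstein-Uhlenbeck Poisson kernel: its behavior along the radial line through $x$ combines the logarithmic geometry of the Gaussian measure with the half-integral subordination by which $P_t$ is defined, and the resulting radial regularity is of order $\az/2$. Extending the delicate kernel estimates from the bounded setting of \cite{LS} to functions of prescribed logarithmic growth, and verifying that the summation over break-points telescopes into the metric $d$ rather than Euclidean distance, is where the bulk of the technical effort lies.
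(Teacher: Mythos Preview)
Your overall architecture is close to the paper's, but there are two concrete gaps.

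\textbf{(ii) $\Rightarrow$ (i).} You propose to ``combine the detailed kernel estimates of \cite{LS}''. This is precisely where the argument would fail. In Proposition~\ref{prop2.1} (the estimate from \cite{LS}), the term $K_3(t,x,y)=\min(1,t)\exp^\ast(-|y|^2)$ has no decay in $|x|$. Since \eqref{eq:gglip} only gives $|f(y)-f(x)|\lesssim (\ln(e+|x|))^{\az/2}+(\ln(e+|y|))^{\az/2}+|y'_x|^\az$, integrating $K_3$ against this yields a bound of order $\min(1,t)\,(\ln(e+|x|))^{\az/2}$, which is \emph{not} controlled by $t^\az$ uniformly in $x$. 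The paper's Lemma~\ref{lem2.2} is exactly the needed repair: it sharpens $K_3$ to $\wz K_3(t,x,y)=\min\{1,\,t/\sqrt{\ln(e+|x|)}\}\exp^\ast(-|y|^2)$, and it is this extra factor $[\ln(e+|x|)]^{-1/2}$ that cancels the logarithmic growth of $f$. You did not identify this refinement.

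\textbf{(i) $\Rightarrow$ (ii).} Two issues. First, your representation $f(x)-f(y)=-\int_0^\infty[\partial_sP_sf(x)-\partial_sP_sf(y)]\,ds$ relies on $f\in L^1(\gz)$, which you justify by saying that \eqref{cond:GHSTV} ``allows logarithmic growth''. But \eqref{cond:GHSTV} is much weaker than $L^1(\gz)$ and certainly does not force logarithmic growth; that is a \emph{consequence} of (ii), not an assumption in (i). The paper avoids this entirely by using the finite-time decomposition \eqref{eq4.1}, $|f(x)-f(y)|\le|f(x)-P_tf(x)|+|P_tf(x)-P_tf(y)|+|P_tf(y)-f(y)|$, which only needs $P_tf\to f$ as $t\to0$.

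Second, your radial argument (break-points along the segment with local times $t\sim(1+|\xi|)^{-1}$, then telescoping) is vague and misses the clean mechanism. The paper's key step is the single weighted gradient estimate (Corollary~\ref{cor:3.4}(ii)):
\[
|\partial_{x_1}P_tf(x)|\lesssim t^{\az-2}(1+x_1)^{-1}.
\]
With this, one integrates along the radial segment directly against the density $d\xi/(1+|\xi|)$ defining $d$, obtaining $|P_tf(x)-P_tf(y_x)|\lesssim t^{\az-2}d(x,y_x)$; a single choice $t=d(x,y_x)^{1/2}$ then gives $d(x,y_x)^{\az/2}$. Proving this weighted gradient bound for unbounded $f$ requires the analytic lemmas in Section~\ref{sect3} (in particular Proposition~\ref{prop3.3}, which justifies differentiating under the integral and gives $\partial_{x_i}P_tf\to0$ as $t\to\infty$ via the pointwise bound of Lemma~\ref{lem3.2}). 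Your proposal does not isolate this estimate, and the break-point scheme as stated does not obviously telescope.
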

The meaning of the symbol $\simeq$ is explained below.

\begin{rem}\rm
To compare  \eqref{eq:gglip} and  \eqref{strong-lip}, one
easily verifies that if $\langle x,y\rangle > 0$ and $1/2 < |x|/|y_x| < 2$,
then
\begin{equation}\label{comp}
d(x,y_x) \simeq \frac{|x-y_x|}{1+|x|}.
\end{equation}
Moreover,
the space $\rm{GLip}_\alpha$  can be described in terms of the distance function $d$.
Indeed, as \eqref{strong-lip} implies boundedness (see \cite[Lemma~2.1]{LS}),
it is easy to check that \eqref{strong-lip} holds if and only if
there exists a constant $K'>0$ such that
$$|f(x)-f(y)|
\le K'\min\lf\{1,\;
|x-y|^\alpha,\; d(x, y_x)^{\frac{\az} 2}
+|y_x'|^\alpha
\r\}$$
for all $x,\,y\in\rn$.
This also tells us that for bounded functions,
  \eqref{strong-lip} is equivalent to \eqref{eq:gglip}.
But \eqref{eq:gglip} implies only that
\[
f(x) = O((\ln |x|)^{\alpha/2}) \qquad \mathrm{as} \qquad |x| \to \infty.
\]
This condition is sharp, as shown by an example in Section 5; observe that
it is much stronger than  \eqref{cond:GHSTV}.
\end{rem}

The paper is organized as follows. Section~2 contains a needed improvement
of the estimate for $P_t(x,y)$ and its derivatives in  \cite{LS}. Some
properties of the Gaussian Poisson integral are obtained in Section~3. Then
Theorem~\ref{thm1.2} is proved in Section~4. Finally, we give in Section~5
an example of a function in $\rm{GGLip}_\az$ with logarithmic growth.

\medskip

\noindent \textbf{Notation.\,}
Throughout the paper, we shall write $C$ for various positive constants which
depend only on $n$
and $\az$, unless otherwise explicitly stated.
Given any two nonnegative quantities $A$ and $B$, the notation
$A\lesssim B$ stands for $A\le C B$ (we say that $A$ is controlled by $B$),
and $A\gtrsim B$  means $B\ls A$.
If $B\ls A\ls B$, we write $A\simeq  B$.

For positive quantities $X$,  we shall write
$\exp^*(-X),$
meaning  $\exp(-cX)$ for some constant
$c=c(n,\az)>0$.

\section{The Ornstein-Uhlenbeck Poisson kernel}\label{sect2}

It is known that  for  $f\in L^2(\gz)$,
$$T_tf(x) =\frac1{\pi^{n/2}}\int_\rn M_{e^{-t}}(x,y)f(y)\,dy, \qquad x\in\rn,\;\;t>0,$$
where $M_{e^{-t}}$ is the \emph{Mehler kernel} defined  by
$$M_r(x,y)=\frac{e^{-\frac{|y-rx|^2}{1-r^2}}}{(1-r^2)^{n/2}},
\qquad x,\,y\in\rn,\quad 0<r<1.$$
The Gaussian Poisson integral $ P_tf$ is given by an integral kernel called
 the \emph{Ornstein-Uhlenbeck Poisson kernel} and denoted  by $P_t(x,y)$; thus
\begin{equation*}
  P_tf(x)=\int_{\rn} P_t(x,y) f(y)\, dy, \qquad x\in\rn,\;\;t>0.
\end{equation*}
Because of the subordination formula,  $P_t(x,y)$ is given by
\begin{align}
P_t(x,y) &=
\frac1{ \pi^{(n+1)/2}}\int_0^\infty \frac{e^{-u}}{\sqrt u}
  \,M_{e^{-t^2/(4u)}}(x,y)\,du \notag \\
&= \frac1{2\pi^{(n+1)/2}} \int_0^\infty
\frac{t}{s^{3/2}} \,e^{-\frac{t^2}{4s}}\,
\frac{\exp(-\frac{|y-e^{-s}x|^2}{1-e^{-2s}})}{(1-e^{-2s})^{n/2}} \,ds.
\label{poisson-ker1}
\end{align}
Here we  inserted the expression for the Mehler kernel
and transformed the variable.

The following estimate for $P_t$ and its first derivatives  is established in \cite[Theorems~1.2 and 1.3]{LS}.

\begin{prop}\label{prop2.1}
For all  $ t>0$, \hskip4pt $x,\:y\in \rn$
 and $i\in\{1,2,\dots,n\}$,
the kernel $P_t$ satisfies
\begin{eqnarray*}
P_t(x, y) +|t\partial_t P_t(x,y)|+|t\partial_{x_i} P_t(x,y)|
\le C \lf[K_1(t,x,y)+K_2(t,x,y)+K_3(t,x,y)+K_4(t,x,y)\r],
\end{eqnarray*}
where
 \begin{eqnarray*}
K_1(t,x,y) &=& \frac t{(t^2+|x-y|^2)^{(n+1)/2}}\,\exp^\ast\lf(-t(1+|x|)\r); \\
K_2(t,x,y) &=&\frac t{|x|} \left(t^2+ \frac{|x-y_x|}{|x|}+|y_x^\prime |^2\right)^{
-\frac{n+2}2}\,\exp^\ast\lf(-\frac{(t^2+|y_x'|^2)|x|}{|x-y_x|}\r)\,
\chi_{\{|x|>1,\; x\cdot y>0,\, |x|/2\le |y_x|<|x|\}};\\
 K_3(t,x,y) &=& \mathrm{min}(1,t)\, \exp^\ast(-|y|^2);\\
K_4(t,x,y) &=&\frac t{|y_x|} \left(\ln \frac {|x|} {|y_x|}\right)^{ -\frac32}  \,
\exp^\ast\left(-\frac {t^2}{\ln \frac  {|x|} {|y_x|}}\right) \,
 \exp^\ast(-|y_x^\prime |^2) \, \chi_{\{x \cdot y>0,\:1<|y_x|<|x|/2\}}.
 \end{eqnarray*}
\end{prop}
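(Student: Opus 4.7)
The plan is to start from the integral representation \eqref{poisson-ker1}, differentiate in $t$ or $x_i$ under the integral sign, and then analyze the resulting $s$-integral regime by regime. Differentiation in $t$ multiplies the integrand by $1 - t^2/(2s)$, which can be absorbed into the factor $e^{-t^2/(4s)}$ at the cost of a slightly smaller constant in the exponent. Differentiation in $x_i$ brings down $2 e^{-s}(y_i - e^{-s}x_i)/(1-e^{-2s})$ from the Mehler exponent; the gradient of the Gaussian is controlled by the Gaussian itself via $r\, e^{-r^2} \ls e^{-r^2/2}$ applied to $r=|y-e^{-s}x|/(1-e^{-2s})^{1/2}$, leaving an extra multiplicative factor $t\, e^{-s}/(1-e^{-2s})^{1/2}$. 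A short check shows that this factor is bounded on the range of $s$ effectively contributing to the integral (where $s \gs t^2$ thanks to the weight $e^{-t^2/(4s)}$). Hence the bounds for $t\partial_t P_t$ and $t\partial_{x_i} P_t$ reduce to the same integral estimate as $P_t$ itself.

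The core task is to estimate
\begin{equation*}
P_t(x,y) \simeq \int_0^\infty \frac{t}{s^{3/2}(1-e^{-2s})^{n/2}} \, \exp(-\psi(s)) \, ds, \qquad \psi(s) = \frac{t^2}{4s} + \frac{|y-e^{-s}x|^2}{1-e^{-2s}},
\end{equation*}
by a Laplace-type argument in each natural regime. The location $s_0$ of the minimum of $\psi$ and the local quadratic width determine the leading exponential and polynomial behavior. I would split according to the size of $|x|$, the sign of $\langle x,y\rangle$, and the size of $|y_x|$ relative to $|x|$. The generic short-time regime $s_0 \ll 1$ with no special alignment between $x$ and $y$ yields the Euclidean-like bound $K_1$; a global contribution from $s$ bounded away from $0$ produces $K_3$; the transition regime $|x|>1$, $\langle x,y\rangle > 0$, $|x|/2 \le |y_x| < |x|$ gives $K_2$ with saddle $s_0 \simeq |x-y_x|/|x|$; and the deep regime $\langle x,y\rangle > 0$, $1 < |y_x| < |x|/2$ produces $K_4$ with $s_0 \simeq \ln(|x|/|y_x|)$.

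The main obstacle is the $K_4$ regime. At the saddle $s_0 = \ln(|x|/|y_x|)$, the value $e^{-s_0}|x|$ equals $|y_x|$, and one must quantify the quadratic behavior of $\psi$ near $s_0$ together with the size of $1-e^{-2s_0}$ in order to extract both the polynomial factor $t(\ln(|x|/|y_x|))^{-3/2}/|y_x|$ and the separated exponential decay $\exp^\ast(-t^2/\ln(|x|/|y_x|)) \, \exp^\ast(-|y_x'|^2)$. The separation comes from splitting $|y-e^{-s}x|^2 = |y_x-e^{-s}x|^2 + |y_x'|^2$ in the numerator; since at $s=s_0$ the denominator $1-e^{-2s_0}$ is bounded below by a positive constant (because $|y_x|<|x|/2$ forces $s_0 > \ln 2$), the term $|y_x'|^2/(1-e^{-2s_0})$ descends to $|y_x'|^2$ up to constants, which is the improvement over \cite{LS}. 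The analogous separation in $K_2$, producing $\exp^\ast(-|y_x'|^2|x|/|x-y_x|)$, is obtained by the same mechanism at the small saddle $s_0 \simeq |x-y_x|/|x|$, where $1-e^{-2s_0} \simeq s_0$. The bounds $K_1$ and $K_3$ are comparatively routine, coming respectively from comparison with the Euclidean Poisson kernel at small $s$ and from the Gaussian tail at $s \gs 1$.
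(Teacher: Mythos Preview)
The paper does not prove Proposition~\ref{prop2.1}; it quotes it verbatim from \cite[Theorems~1.2 and 1.3]{LS} and then proves only the sharpening in Lemma~\ref{lem2.2}, which replaces $K_3$ by $\widetilde{K_3}$. So there is no proof in this paper to compare your proposal against. That said, your outline---differentiate under the integral, absorb the extra factors into $\exp^\ast$ at the cost of smaller constants, then run a Laplace-type analysis on $\psi(s)=t^2/(4s)+|y-e^{-s}x|^2/(1-e^{-2s})$ with a case split on the location of the saddle---is the natural approach, and the paper's own remarks (see the first lines of the proof of Lemma~\ref{lem2.2} and the proof of Lemma~\ref{lem3.1}) confirm that \cite{LS} proceeds in exactly this way.

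Two comments. First, your sentence ``which is the improvement over \cite{LS}'' in the $K_4$ discussion is a misreading: Proposition~\ref{prop2.1} \emph{is} the result of \cite{LS}, and the $\exp^\ast(-|y_x'|^2)$ factor in $K_4$ is already there. The only improvement in the present paper is the decay in $|x|$ built into $\widetilde{K_3}$. Second, your handling of the $x_i$-derivative is slightly glib: the claim that the extra factor $t\,e^{-s}/(1-e^{-2s})^{1/2}$ is ``bounded on the range effectively contributing'' is morally right but needs to be phrased as an absorption into $e^{-t^2/(4s)}$. For small $s$ one has $t/\sqrt{s}\cdot e^{-t^2/(4s)}\lesssim e^{-t^2/(8s)}$; for $s\gtrsim 1$ one uses $t\,e^{-t^2/(4s)}\lesssim \sqrt{s}$ and then $\sqrt{s}\,e^{-s}\lesssim 1$. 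With that adjustment your reduction of all three quantities to a single $\exp^\ast$-integral is correct, and the regime-by-regime analysis you sketch matches the structure of \cite{LS}.
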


We need a slight sharpening of this lemma. The term $K_3$
will be modified
to decay for large $x$.

\begin{lem}\label{lem2.2}
The estimate of Proposition \ref{prop2.1} remains valid if the kernel
$K_3(t,x,y)$  is replaced by
\begin{eqnarray*}
\wz{K_3}(t,x,y)= \min\lf\{1,\,\frac{t }{ [\ln(e+|x|)]^{1/2}}\r\}\exp^\ast(-|y|^2).
\end{eqnarray*}
\end{lem}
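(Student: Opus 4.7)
The plan is to revisit the proof of Proposition~\ref{prop2.1} in \cite{LS} and sharpen only the step that produced the $K_3$ term. Since $\ln(e+|x|)\simeq 1$ for $|x|\le 2$, in that range $\widetilde K_3\simeq K_3$ and the lemma reduces to Proposition~\ref{prop2.1}, so one may assume $|x|\ge 2$. The key observation is that the $\exp^\ast(-|y|^2)$ decay in $K_3$ arises in the proof of Proposition~\ref{prop2.1} only from the range in the subordination integral \eqref{poisson-ker1} where $e^{-s}|x|$ has become comparable to or smaller than $\max(1,|y|)$; in particular, when $|y|$ is bounded and $|x|\ge 2$, only $s\gtrsim\ln(e+|x|)$ really contributes to $K_3$, and the decay of $s^{-3/2}$ over this range produces an extra $1/\sqrt{\ln(e+|x|)}$ gain.

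Concretely, I set $s_0\simeq\ln(e+|x|)$ and split the integral in \eqref{poisson-ker1} at $s=s_0$. For $s\ge s_0$ one has $e^{-s}|x|\le 1$, and a case split on whether $|y|$ dominates $e^{-s}|x|$ or not shows that the Gaussian factor is $\le C\exp^\ast(-|y|^2)$. After pulling this factor out, the remaining integral is handled via the substitution $u=t^2/(4s)$:
\begin{equation*}
t\int_{s_0}^{\infty} s^{-3/2}e^{-t^2/(4s)}\,ds = 2\int_{0}^{t^2/(4s_0)} u^{-1/2}e^{-u}\,du \le C\min\bigl\{1,\,t/\sqrt{s_0}\bigr\},
\end{equation*}
yielding exactly the required $\min\{1,t/\sqrt{\ln(e+|x|)}\}$ factor of $\widetilde K_3$. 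The derivatives $t\partial_t P_t$ and $t\partial_{x_i}P_t$ are treated in the same way: differentiating in $t$ produces an extra factor $1-t^2/(2s)$ that leads to an integral of the same form, handled by the same substitution; differentiating in $x_i$ produces an extra factor $\lesssim e^{-s}|y-e^{-s}x|/(1-e^{-2s})$ which, in the range $s\ge s_0$, is $\lesssim (e+|x|)^{-c}(1+|y|)$, providing decay far stronger than $1/\sqrt{\ln(e+|x|)}$.

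The main obstacle is the bookkeeping: one has to verify that the complementary $s$-region $s<s_0$, where the new improvement cannot be extracted, is already covered by $K_1+K_2+K_4$. This requires revisiting the case analysis of \cite{LS} on the relative sizes of $|y|$, $e^{-s}|x|$ and $|y-e^{-s}x|$, and checking that for $s<s_0$ the integrand either yields a Gaussian decay in $|x|$ (hence is negligible) or is absorbed by the geometric regions defining $K_1$, $K_2$ and $K_4$. Apart from this careful verification, the remaining steps of the proof are routine computations.
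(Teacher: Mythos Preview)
Your proposal is correct and follows essentially the same approach as the paper: both extract the extra $[\ln(e+|x|)]^{-1/2}$ from the large-$s$ region of the subordination integral via the computation $\int_{s_0}^\infty t\,s^{-3/2}e^{-t^2/(4s)}\,ds\lesssim\min\{1,t/\sqrt{s_0}\}$ with $s_0\simeq\ln(e+|x|)$, and both rely on the case analysis of \cite{LS} for the remaining $s$-range. The paper shortens your ``bookkeeping'' step by going directly to the specific terms $J_2$, $J_{2,1}^{(2)}$, $J_{2,2}$, $J_{2,3}$ in \cite[Proposition~4.1]{LS} that produced $K_3$ there and re-estimating each by $\widetilde K_3$ (so no separate verification that the small-$s$ region is absorbed by $K_1+K_2+K_4$ is needed), and it reduces the derivatives to the $P_t$ case by citing \cite{LS} rather than treating them explicitly as you do.
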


\begin{proof}
From the proof of \cite[Theorem~1.3]{LS}, we see that  $|t\partial_t P_t(x,y)|$ and $|t\partial_{x_i} P_t(x,y)|$ can be controlled by an integral similar to the right-hand side of \eqref{poisson-ker1}
(only with $\exp$ in \eqref{poisson-ker1} replaced by $\exp^\ast$). Thus, we only need to consider $P_t(x,y)$.

When $|x|\le 4+2|y_1|$,
we have
$
\exp^\ast(-|y|^2) \ls \exp^\ast(-|y|^2) \exp^\ast(-|x|^2)
$
and hence
$K_3(t,x,y)\ls \wz{K_3}(t,x,y).$

Thus we assume from now on that  $|x| > 4+2|y_1|$. We shall sharpen a
few arguments in the proof of  \cite[Proposition~4.1]{LS}.
By the rotation invariance of $P_t(x,y)$ and $\wz{K_3}(t,x,y)$, we may assume
that $x=(x_1,0,\dots,0)$ with $x_1 > 0$.
 The decomposition
of $y$ will then be written $y = (y_1,0,\dots,0) + (0,y')$, and $|y_1| < x_1/2$.

\medskip

\noindent {\it Case 1.}\, $-x_1/2<y_1\le 0$.\,
Using the notation from  the proof of \cite[Proposition~4.1(i)]{LS},
we see that we only need to verify that $J_2 \ls \wz{K_3}$.
By \cite[formula (4.9)]{LS} and the fact that $y_1 \le 0< x_1$, we have
$ $
\begin{eqnarray}
J_2 &&\simeq \exp^\ast(-|y'|^2)\,\int_{\ln  2}^\infty
\frac {t}{s^{3/2}}\,\exp^\ast\lf(-\frac{t^2}{s}\r)\,
\exp^\ast(-|y_1-e^{-s}x_1|^2) \,ds \notag\\
&&\ls \exp^\ast(-|y|^2)\,\int_{\ln  2}^\infty
\frac {t}{s^{3/2}}\,\exp^\ast\lf(-\frac{t^2}{s}\r)\,
\exp^\ast(-e^{-2s}x_1^2) \,ds. \label{j2}
\end{eqnarray}
Note that
\begin{eqnarray*}  
\int_{\frac12\ln  x_1}^\infty
\frac {t}{s^{3/2}}\,\exp^\ast\lf(-\frac{t^2}{s}\r)\,
\exp^\ast(-e^{-2s}x_1^2) \,ds
&&\simeq \int_{\frac12\ln  x_1}^\infty
\frac {t}{s^{3/2}}\,\exp^\ast\lf(-\frac{t^2}{s}\r)\,
 \,ds\\
 &&\ls \min\lf\{1,\, t  (\ln x_1)^{-1/2}\r\} \notag
\end{eqnarray*}
and
\begin{eqnarray*}   
\quad\int_{\ln  2}^{\frac12\ln  x_1}
\frac {t}{s^{3/2}}\,\exp^\ast\lf(-\frac{t^2}{s}\r)\,
\exp^\ast(-e^{-2s}x_1^2) \,ds
&&\le \exp^\ast(-x_1)  \int_{\ln  2}^{\frac12\ln  x_1}
\frac {t}{s^{3/2}}\,\exp^\ast\lf(-\frac{t^2}{s}\r)\,
 \,ds\\
&&\ls\exp^\ast(-x_1) \min\{1,\,t\}, \notag
\end{eqnarray*}
from which the required estimate follows.

\medskip

\noindent {\it Case 2:}\, $0<y_1<x_1/2$.\,  Considering now the proof of
\cite[Proposition~4.1(iii)]{LS},
we only need to estimate the terms $J_{2,1}^{(2)}$ and $J_{2,3}$,
 and also  $J_{2,2}$ when $y_1\in(0,1]$.

From  \cite[formula (4.16)]{LS}, we get for  $y_1\in(0,1]$,
 \begin{eqnarray*}
 J_{2,2}
&&\simeq  \frac t{(\ln \frac{x_1}{y_1})^ {3/2}}
\exp^*{ \left(-\frac{t^2}{\ln \frac{x_1}{y_1}}\right)} \exp^*{(-|y' |^2)}\\
&&\ls
 \min\lf\{\frac t{(\ln \frac{x_1}{y_1})^{3/2}} ,\, \frac 1{\ln \frac{x_1}{y_1}} \r\}
 \exp^*{(-|y|^2)}\\
&&\ls \wz{K_3}(t,x,y),
\end{eqnarray*}
since here $\ln \,(x_1/y_1) \gs \ln\,(e+|x|)$. Further,
\begin{eqnarray}\label{eq2.4}
J_{2,1}^{(2)}+J_{2,3}
&& \le \exp^\ast(-|y'|^2)\,
\int   \frac {t}{s^{3/2}}\,\exp^\ast\lf(-\frac{t^2}{s}\r)\,
\exp^\ast(-|y_1-e^{-s}x_1|^2) \,ds,
\end{eqnarray}
where the integral is taken over the set $\{s> \ln 2: |s-\ln\, (x_1/y_1)| > c_0 \}$,
for some $c_0>0$. Thus the quotient $e^{-s}x_1/y_1$ stays away from $1$ in this
integral, so that
$|y_1-e^{-s}x_1| \simeq \max\{e^{-s} x_1,\, y_1 \}\simeq e^{-s}x_1+y_1$.
This implies that the right-hand side of \eqref{eq2.4} is controlled by the expression in \eqref{j2} and thus by  $\wz{K_3}$.

Lemma \ref{lem2.2} is proved.
\end{proof}


\section{Auxiliary lemmas}\label{sect3}

\begin{lem}\label{lem3.1}
There exists a constant $C>0$ such that for all $x,\,y\in\rn$ and $t>0$,
\begin{equation*}
|\partial_tP_t(x,y)|
\le C\, \frac1{t}\,P_{t/2}(x,y).
\end{equation*}
\end{lem}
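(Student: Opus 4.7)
The plan is to differentiate the subordination representation \eqref{poisson-ker1} of $P_t(x,y)$ under the integral sign in $t$, and then compare the resulting integrand pointwise in $s$ with that of $t^{-1}P_{t/2}(x,y)$. Since the Mehler factor
\[
\frac{\exp(-|y-e^{-s}x|^2/(1-e^{-2s}))}{(1-e^{-2s})^{n/2}}
\]
is nonnegative and appears unchanged in both representations, the lemma reduces to a purely one-variable estimate on the $(t,s)$-weight.

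Concretely, I would compute
\[
\partial_t\!\left(\frac{t}{s^{3/2}}e^{-t^2/(4s)}\right)
= \frac{1}{s^{3/2}}\!\left(1-\frac{t^2}{2s}\right)e^{-t^2/(4s)},
\]
whereas the corresponding weight appearing in $t^{-1}P_{t/2}(x,y)$ is $\frac{1}{2s^{3/2}}e^{-t^2/(16s)}$. Thus the lemma will follow from the pointwise inequality
\[
\left|1-\frac{t^2}{2s}\right|e^{-t^2/(4s)} \,\le\, C\, e^{-t^2/(16s)}, \qquad s,t>0.
\]
After dividing through by the right-hand side this is the same as $|1-u/2|\,e^{-3u/16}\le C$ with $u=t^2/s\ge 0$, which is immediate since the exponential dominates the linear factor at infinity and the product is continuous at $u=0$.

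The one minor subtlety is justifying the interchange of $\partial_t$ and the integral over $s$, but this is automatic from the same pointwise bound: it dominates the differentiated integrand by $2\,t^{-1}$ times the integrand of $P_{t/2}(x,y)$, which is integrable in $s$ (locally uniformly in $t$). I do not foresee any genuine obstacle; once the subordination formula is differentiated, the proof is essentially the one-line pointwise comparison above. The small point that distinguishes a correct calculation from a useless one is choosing to dominate by $t^{-1}P_{t/2}$ rather than $t^{-1}P_{t/\lambda}$ for some other $\lambda$: we need $\lambda>2$ (equivalently, an exponent strictly less than $1/(4s)$) in order to keep enough decay $e^{-ct^2/s}$ to absorb the polynomial factor $|1-t^2/(2s)|$.
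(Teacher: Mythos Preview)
Your proof is correct and follows exactly the paper's approach: differentiate the subordination formula \eqref{poisson-ker1} and use the pointwise bound $e^{-t^2/(4s)}|1-t^2/(2s)|\lesssim e^{-(t/2)^2/(4s)}$, then compare with \eqref{poisson-ker1} at $t/2$. One small slip in your closing remark: the comparison with $t^{-1}P_{t/\lambda}$ works for any $\lambda>1$, not only $\lambda>2$, since the leftover factor is $|1-u/2|e^{-u(1-\lambda^{-2})/4}$; in particular $\lambda=2$ (the choice actually used) is fine.
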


\begin{proof}
Differentiating \eqref{poisson-ker1}, we get
\begin{equation*}     
\partial_{t} P_t(x,y)
= \frac1{2\pi^{(n+1)/2}}\,\frac1{t} \int_0^\infty \frac {t}{s^{3/2}}\,e^{-\frac{t^2}{4s}}\,
\lf(1-\frac{t^2}{2s}\r)
\frac{e^{-\frac{|y-e^{-s}x|^2}{1-e^{-2s}}}}{(1-e^{-2s})^{n/2}} \,ds.
\end{equation*}
It is now enough to observe that
\[
e^{-\frac{t^2}{4s}}\,
\lf|1-\frac{t^2}{2s} \r| \ls e^{-\frac{{(t/2)}^2}{4s}}
\]
and compare with \eqref{poisson-ker1}.
\end{proof}

\begin{lem}\label{lem3.2}
Fix $i\in\{1,2,\dots,n\}$ and let $R>0$. Then there exists a constant
$C>0$, depending only on $n$ and $R$, such that
 for all $x,\:y\in\rn$ with $|x|< R$,
\begin{equation}\label{a}
|\partial_{x_i}P_t(x,y)|
\le C\, (1 + t^{-4-n}) P_{t/2}(x,y), \qquad t>0,
\end{equation}
and
\begin{equation}\label{b}
|\partial_{x_i}P_t(x,y)|
\le C  t^{-1/2}e^{-|y|^2} [\ln (e+|y|)]^{-3/4}, \qquad t>1.
\end{equation}

\end{lem}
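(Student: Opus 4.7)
The plan is to differentiate the integral formula \eqref{poisson-ker1} for $P_t$ with respect to $x_i$ under the integral sign, and then estimate the resulting expression. A direct computation yields
\[
\partial_{x_i} P_t(x,y) = \frac{1}{\pi^{(n+1)/2}}\int_0^\infty \frac{t}{s^{3/2}}\,e^{-\frac{t^2}{4s}}\,\frac{e^{-s}(y_i-e^{-s}x_i)}{(1-e^{-2s})^{(n+2)/2}}\,e^{-|y-e^{-s}x|^2/(1-e^{-2s})}\,ds.
\]
Applying $|y_i-e^{-s}x_i|\le |y-e^{-s}x|$ together with the elementary estimate $r\,e^{-r^2/2}\le C$ with $r=|y-e^{-s}x|/\sqrt{1-e^{-2s}}$, half of the Gaussian absorbs the linear factor, leaving the key bound
\[
|\partial_{x_i} P_t(x,y)|\,\le\,C\int_0^\infty \frac{t}{s^{3/2}}\,e^{-\frac{t^2}{4s}}\,\frac{e^{-s}}{(1-e^{-2s})^{(n+1)/2}}\,e^{-|y-e^{-s}x|^2/(2(1-e^{-2s}))}\,ds,
\]
which I will call $(\star)$.

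For part (a), I would compare the integrand in $(\star)$ with the one defining $P_{t/2}(x,y)$ via \eqref{poisson-ker1}. They differ in three ways: an extra factor $e^{-s}(1-e^{-2s})^{-1/2}$, the weight $e^{-t^2/(4s)}$ rather than $e^{-t^2/(16s)}$, and a halving of the exponent in the Gaussian. The second of these is favourable (it only tightens the weight). The $s^{-1/2}$ singularity at $s=0$ I would absorb through the trade-off $e^{-t^2/(4s)}\le C_N(s/t^2)^N e^{-t^2/(8s)}$, which converts powers of $s^{-1}$ into powers of $t^{-2}$. The halving of the Gaussian denominator I would handle using the hypothesis $|x|<R$: either write $e^{-A/2}=e^{-A}\,e^{A/2}$ with $A=|y-e^{-s}x|^2/(1-e^{-2s})$ and control $e^{A/2}$ by splitting the $s$-range, using that $|y-e^{-s}x|$ is comparable to $|y-e^{-\sigma}x|$ for $\sigma$ close to $s$ (with error depending on $R$); or perform a change of variables $s\mapsto 2\sigma$ which reconciles the Gaussian denominators up to a bounded $R$-dependent factor. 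Tracking these substitutions, the net loss is polynomial in $t^{-1}$ and bookkeeps out to $1+t^{-n-4}$.

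For part (b), I would analyse $(\star)$ directly for $t>1$. When $|y|\le 2R$ the bound is immediate from Proposition~\ref{prop2.1} and Lemma~\ref{lem2.2}: the terms $K_1$ and $\wz{K_3}$ are bounded by $Ct^{-1}\le Ct^{-1/2}$ times a constant depending on $R$, while $K_2$ and $K_4$ vanish on $|x|<R$ once $R$ is fixed (only shifting constants). For $|y|>2R$, since $|x|<R$ we have $|y-e^{-s}x|^2/(2(1-e^{-2s}))=|y|^2/(2(1-e^{-2s}))+O_R\bigl(|y|/(1-e^{-2s})\bigr)$, so the integrand in $(\star)$ has the form $\exp(-\phi_t(s,y))$ with $\phi_t$ minimised at a unique $s_*=s_*(|y|,t)$ of order $\ln|y|$ and with minimum value $|y|^2(1+o(1))$. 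Laplace's method then gives $e^{-|y|^2}$ decay, while the polynomial prefactors at $s_*$, in particular $(1-e^{-2s_*})^{-(n+1)/2}$ and $s_*^{-3/2}$, combined with the Gaussian integration weight $|\phi_t''(s_*)|^{-1/2}$, produce the remaining factor $t^{-1/2}[\ln(e+|y|)]^{-3/4}$.

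The main obstacle is part (b): identifying the precise exponent $3/4$ in $[\ln(e+|y|)]^{-3/4}$ requires carefully combining the extra $-1/2$ power of $(1-e^{-2s})$ produced by differentiation (visible in $(\star)$) with the standard Laplace-method weight $|\phi_t''(s_*)|^{-1/2}$ at the critical point; a crude bound only yields the weaker exponent $-1/2$ familiar from \eqref{cond:GHSTV}. Part (a), by contrast, is essentially routine bookkeeping once $(\star)$ is established, the only subtlety being the alignment of the two Gaussian denominators on the bounded set $|x|<R$.
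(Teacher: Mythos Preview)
Your reduction to $(\star)$ is a fatal loss of decay. When you absorb the factor $|y_i-e^{-s}x_i|$ into half of the Gaussian via $r\,e^{-r^2}\le C\,e^{-r^2/2}$, the resulting integrand carries only $\exp\bigl(-|y-e^{-s}x|^2/(2(1-e^{-2s}))\bigr)$, which for large $|y|$ is of order $e^{-|y|^2/2}$ at best. But both targets require $e^{-|y|^2}$: this is explicit in \eqref{b}, and for \eqref{a} it is implicit because $P_{t/2}(x,y)\lesssim e^{-|y|^2}$ for $|x|<R$ and $|y|$ large (cf.\ the estimate from \cite{GHSTV} quoted just after \eqref{cond:GHSTV}). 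Concretely, take $x=0$, $t=1$: your $(\star)$ is at least $c\,e^{-|y|^2/2}$ (from the range $s>1$), while $P_{1/2}(0,y)\lesssim e^{-|y|^2}$, so $(\star)/P_{1/2}(0,y)\to\infty$. Neither of your proposed fixes recovers the lost $e^{-|y|^2/2}$: the substitution $s\mapsto 2\sigma$ changes the denominator $2(1-e^{-2s})$ to $2(1-e^{-4\sigma})\approx 8\sigma$, not to $1-e^{-2\sigma}\approx 2\sigma$, so the mismatch persists; and the ``comparability'' argument cannot help because the exponent itself is off by the unbounded quantity $|y|^2/2$. For the same reason your Laplace analysis in (b) can only produce $e^{-|y|^2/2}$, and your attribution of the exponent $3/4$ to the extra factor $(1-e^{-2s})^{-1/2}$ is also off: at the relevant scale $s\sim\ln|y|$ that factor is $\simeq 1$.

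The paper's proof avoids this trap by \emph{not} sacrificing any of the Gaussian. It keeps the extra factor $e^{-s}(y_i-e^{-s}x_i)/(1-e^{-2s})$ intact, bounds it crudely by $e^{-s}|y|/(1-e^{-2s})$ for $|y|$ large, and then proves the two-sided exponent estimate
\[
e^{E(s,x,y)}\lesssim e^{-|y|^2}\min\Bigl(1,\frac{e^{2s}}{|y|^2}\Bigr),
\qquad
E(s,x,y)\ge -|y|^2-C\ \text{ for }s>\ln|y|,
\]
valid precisely because $|x|<R$. With these in hand the $s$-integral is split at $s=3$ and $s=\ln|y|$; on each piece the trade-off $e^{-t^2/(4s)}\le C_\gamma(s/t^2)^\gamma e^{-(t/2)^2/(4s)}$ (with the choice $\gamma=3/4$ for \eqref{b}) gives the claimed bounds. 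The correct route, then, is to estimate the exponent $E$ sharply rather than to trade part of it away.
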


\begin{proof}
In this proof, all constants denoted $C$ will depend only on  $n$ and $R$,
and the same applies to the implicit constants in the $\lesssim$ and
$\simeq$ symbols.
We let  $|x|< R$, and we can clearly assume that $R>1$.

Differentiating \eqref{poisson-ker1}, we get
\begin{equation}\label{xj}
\partial_{x_i}P_t(x,y)= \frac1{\pi^{(n+1)/2}} \int_0^\infty
\frac{t}{s^{3/2}} \,e^{-\frac{t^2}{4s}}\, \frac{e^{-s}(y_i-e^{-s}x_i)}{1-e^{-2s}}\,
\frac{\exp(-\frac{|y-e^{-s}x|^2}{1-e^{-2s}})}{(1-e^{-2s})^{n/2}} \,ds.
\end{equation}
Compared with  \eqref{poisson-ker1}, the integral has now an extra factor
${e^{-s}(y_i-e^{-s}x_i)}/{(1-e^{-2s})}$.

With $\gamma>0$, we shall use repeatedly the simple inequality
\begin{equation}
  \label{simple}
e^{-\frac{t^2}{4s}} \le C_\gamma \left(\frac{s}{t^2}\right)^\gamma e^{-\frac{(t/2)^2}{4s}}
\end{equation}
for some $ C_\gamma>0$, and here we sometimes drop the last factor.

We start with the simple case of bounded $y$; more precisely we assume
$|y|\le e^{12}\,R$. Then the extra
factor is no larger than $Ce^{-s}/{(1-e^{-2s})}$. An application of  \eqref{simple}
with  $\gamma = 1+n/2$ yields
\[
|\partial_{x_i}P_t(x,y)|  \lesssim t^{-2-n} \int_0^\infty \frac{t}{s^{3/2}} \,
e^{-\frac{(t/2)^2}{4s}}\frac{e^{-s}s^{1+n/2}}{(1-e^{-2s})^{1+n/2}}\,
\exp\left(-\frac{|y-e^{-s}x|^2}{1-e^{-2s}}\right)\,ds.
\]
Comparing with  \eqref{poisson-ker1}, one  sees that
this estimate implies  \eqref{a}. If we choose instead  $\gamma = 2+n/2$,
\eqref{b} will also follow, since $y$ stays bounded.

From now on, we assume that  $|y|>   e^{12}\,R$. Then  \eqref{xj} implies
\begin{equation}\label{xjj}
|\partial_{x_i}P_t(x,y)| \lesssim \int_0^\infty
\frac{t}{s^{3/2}} \,e^{-\frac{t^2}{4s}}\, \frac{e^{-s}|y|}{1-e^{-2s}}\,
\frac{\exp(-\frac{|y-e^{-s}x|^2}{1-e^{-2s}})}{(1-e^{-2s})^{n/2}} \,ds.
\end{equation}

We first estimate the exponent
\begin{equation*}
 E(s,x,y) =  -\frac{|y-e^{-s}x|^2}{1-e^{-2s}}
\end{equation*}
from
  \eqref{xjj}.   It satisfies
\begin{equation*}
 E(s,x,y)  \le  \frac{-|y|^2 + 2e^{-s} y\cdot x}{1-e^{-2s}}
\le \frac{-|y|^2 + \frac12  e^{-2s}|y|^2 + 2 |x|^2}{1-e^{-2s}},
 \end{equation*}
where we applied the inequality between the geometric and arithmetic means.
 If  $e^{-s}<1/2$, then
\[
E(s,x,y)  \le  \frac{-|y|^2 + \frac12  e^{-2s}|y|^2}{1-e^{-2s}} + C.
\]
If instead $e^{-s}\ge 1/2$, we have $2 |x|^2 < e^{-2s}|y|^2/4 $
since $|y|>e^{12}|x|$,  and thus
\[
E(s,x,y)  \le  \frac{-|y|^2 + \frac34  e^{-2s}|y|^2}{1-e^{-2s}}.
\]
In both cases,
\[
E(s,x,y)  \le  -|y|^2 \: \frac{1 - \frac34  e^{-2s}}{1-e^{-2s}} + C
\le  -|y|^2\left(1 + \frac14  e^{-2s}\right) + C,
\]
and this implies
\begin{equation}
  \label{expon}
 e^{E(s,x,y)} \lesssim e^{-|y|^2} \min\left(1,  \frac{e^{2s}}{|y|^2} \right).
\end{equation}

We also need a converse inequality, under the assumption that $s>\ln|y|$.
Then
\begin{equation}  \label{converse}
 E(s,x,y)  \ge  \frac{-|y|^2 - 2e^{-s} |y||x| -e^{-2s}|x|^2}{1-e^{-2s}}
\ge \frac{-|y|^2}{1-|y|^{-2}} - C \ge-|y|^2 - C.
  \end{equation}

Now split the integral in  \eqref{xjj} as
\[
\left(\int_0^3 + \int_3^{\ln{|y|}} +\int_{\ln{|y|}}^\infty\right) \frac{t}{s^{3/2}} \,e^{-\frac{t^2}{4s}}\, \frac{e^{-s}|y|}{1-e^{-2s}}\,
\frac{\exp(-\frac{|y-e^{-s}x|^2}{1-e^{-2s}})}{(1-e^{-2s})^{n/2}} \,ds
= I_1 +  I_2 +  I_3,
\]
say; observe that  $\ln|y|>12$.
We shall prove that these three integrals  satisfy the bounds in
  \eqref{a} and \eqref{b}.

In $I_3$, we have
${e^{-s}|y|}/{(1-e^{-2s})} \lesssim 1$.
Comparing with  \eqref{poisson-ker1}, we conclude that
\[
I_3 \lesssim  P_{t}(x,y) \lesssim  P_{t/2}(x,y),
\]
which is part of  \eqref{a}. Aiming at  \eqref{b}, we apply  \eqref{simple}
with   $\gamma = 3/4$ and
 \eqref{expon}, where the  minimum is 1, to conclude that
\[
 I_3 \lesssim \int_{\ln{|y|}}^\infty t^{-1/2}\, s^{-3/4}\, e^{-s}\,|y|\, e^{-|y|^2} \,ds
\lesssim  t^{-1/2} \, (\ln |y|)^{-3/4}\, e^{-|y|^2},
\]
as desired.

To deal with $ I_2$, we apply  \eqref{expon},
now with the second quantity in the minimum, and obtain
\begin{equation}
  \label{middle}
I_2 \lesssim \int_3^{\ln|y|} \frac{t}{s^{3/2}} \,
e^{-\frac{t^2}{4s}}  \,   
\frac{e^s}{|y|}\,e^{-|y|^2} \,ds.
   \end{equation}
Using  \eqref{simple}, again with   $\gamma = 3/4$, we can
 estimate this integral  by
\[
 t^{-1/2} e^{-|y|^2} \int_3^{\ln|y|} s^{-3/4}\, \frac{e^s}{|y|} \,ds,
\]
which gives the bound in  \eqref{b} for  $ I_2$.
Thinking of  \eqref{a}, we write
the integral in  \eqref{middle} as
\[
 t e^{-|y|^2}|y|^{-1} \int_3^{\ln|y|}  \phi(s)
e^{s/2} \,ds,
\]
where
\[
 \phi(s) = \frac{ e^{s/2}}{s^{3/2}} \,
e^{-\frac{t^2}{4s}}.
\]
Here both the factors are increasing functions of $s$ in $(3,\infty)$,
and so is $\phi$. Thus for any $\eta \in (0,1)$,
\[
\sup_{(3,\ln|y|)} \phi(s) \le \phi(\eta + \ln|y|),
\]
and so
\[
 I_2 \lesssim t e^{-|y|^2}|y|^{-1}  \phi(\eta +\ln|y|)  \int_3^{\ln|y|}  e^{s/2} \,ds
\simeq  t e^{-|y|^2} \frac{ 1}{(\eta +\ln|y|)^{3/2}} \,
e^{-\frac{t^2}{4(\eta +\ln|y|)}}.
\]
Integrating in $\eta,$ we see that
\begin{equation}
  \label{dd}
 I_2   \lesssim \int_{\ln|y|}^{1+\ln|y|} \frac{t}{s^{3/2}}\, e^{-\frac{t^2}{4s}} \,
e^{-|y|^2}\,ds.
\end{equation}
Because of \eqref{converse}, this integral is dominated by the one defining
  $P_t(x,y)$  in  \eqref{poisson-ker1}. Since
$P_t(x,y) \lesssim P_{t/2}(x,y)$,
it follows that  $ I_2   \lesssim P_{t/2}(x,y)$.

Finally, we estimate  $I_1 $ by means of  \eqref{expon}.
Since here $1-e^{-2s} \simeq s$, we get
\[
I_1 \lesssim \int_0^{3} \frac{t}{s^{3/2}} \, e^{-\frac{t^2}{4s}} \,\frac 1{|y| \,s^{1+n/2}}\,
e^{-|y|^2}\,ds.
\]
Using  \eqref{simple} with   $\gamma = 2+n/2$,
we conclude that
\begin{equation}
  \label{03}
I_1   \lesssim  t^{-3-n} \int_0^{3}  s^{-1/2} e^{-\frac{(t/2)^2}{4s}} \frac 1{|y|}\,e^{-|y|^2} \,ds.
\end{equation}
This leads immediately to the bound in  \eqref{b}. For  \eqref{a},
we can estimate the right-hand side in \eqref{03} by
\[
  t^{-3-n} \frac 1{|y|}\, e^{-\frac{t^2}{48}}\, e^{-|y|^2}
 \lesssim t^{-4-n}\, \frac{t}{(\eta + \ln|y|)^{3/2}} \, e^{-\frac{t^2}{4(\eta + \ln|y|)}} \,
e^{-|y|^2}
\]
with  $\eta \in (0,1)$ as before, since $\ln|y|>12$. As a result, we get a bound
for  $I_1 $
 similar to  \eqref{dd}
but with an extra factor $ t^{-4-n}$, and thus also  the bound in \eqref{a}.

Lemma \ref{lem3.2} is proved.
\end{proof}

\begin{prop}\label{prop3.3}
Let $f$ be a measurable function on $\rn$ satisfying \eqref{cond:GHSTV}.
Then for all $i\in\{1,2,\dots,n\}$ and $x\in\rn$ ,
\begin{equation}\label{eq3.15}
\partial_{x_i}\partial_tP_{s+t}f(x)
=\int_{\rn} \partial_{x_i}P_s(x,y)\,\partial_tP_tf(y)\,dy, \qquad s,\,t > 0,
\end{equation}
and
\begin{equation}\label{eq3.16}
\lim_{t\to\infty}\partial_{x_i}P_tf(x)=0.
\end{equation}
\end{prop}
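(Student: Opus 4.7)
The plan is to combine the semigroup identity $P_{s+t} = P_s \circ P_t$ with Leibniz's rule, the integrable dominants being supplied by Lemmas \ref{lem3.1} and \ref{lem3.2}. For part (i), I would first verify the pointwise identity $\partial_t P_t f(z) = \int_{\rn} \partial_t P_t(z,y) f(y)\, dy$. By Lemma \ref{lem3.1}, $|\partial_\tau P_\tau(z,y)| \le C\tau^{-1}P_{\tau/2}(z,y)$, so the growth condition \eqref{cond:GHSTV} on $f$ together with the semigroup property makes the $y$-integral absolutely convergent; $\tau$-uniform dominants on a compact sub-interval of $(0,\infty)$ come from the explicit bounds of Proposition \ref{prop2.1} and Lemma \ref{lem2.2}, justifying differentiation under the integral.

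Next, by Chapman-Kolmogorov and Fubini, $P_{s+t}f(x) = \int P_s(x,z) P_t f(z)\, dz$. Differentiating in $t$ under the outer integral --- again via Lemma \ref{lem3.1} and the semigroup identity $\int P_s(x,z) P_{\tau/2}(z,y)\,dz = P_{s+\tau/2}(x,y)$ to produce a $\tau$-uniform majorant --- yields $\partial_t P_{s+t}f(x) = \int P_s(x,z)\, \partial_t P_t f(z)\, dz$. Finally, differentiating in $x_i$ under this integral, I would use Lemma \ref{lem3.2}(a): for $|x|<R$, $|\partial_{x_i}P_s(x,z)| \le C(R,s) P_{s/2}(x,z)$, and combining with $|\partial_t P_t f(z)| \le C t^{-1} P_{t/2}|f|(z)$ shows that for fixed $x$ the integrand has absolute integral bounded by $C(R,s,t)\, P_{(s+t)/2}|f|(x)<\infty$. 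The essential technical point is to upgrade this to a $z$-integrable majorant uniform over a compact $x$-neighborhood (as Leibniz actually requires), which I would achieve by invoking the four-term decomposition in Proposition \ref{prop2.1} and Lemma \ref{lem2.2} to extract uniform $z$-decay of $P_{s/2}(x,z)$ on compact $x$-sets.

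For part (ii), the same Leibniz argument (via Lemma \ref{lem3.2}(a)) gives $\partial_{x_i}P_tf(x) = \int \partial_{x_i}P_t(x,y) f(y)\, dy$, and then Lemma \ref{lem3.2}(b) yields, for $|x|<R$ and $t>1$,
$$
|\partial_{x_i} P_t f(x)| \le C(R)\, t^{-1/2}\int_{\rn} e^{-|y|^2}[\ln(e+|y|)]^{-3/4}|f(y)|\, dy.
$$
Since $\ln(e+|y|)\ge 1$ implies $[\ln(e+|y|)]^{-3/4} \le [\ln(e+|y|)]^{-1/2}$, the integral is finite by \eqref{cond:GHSTV}, and letting $t\to\infty$ gives \eqref{eq3.16}. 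The principal obstacle throughout is securing the uniform-in-$x$ dominant in part (i); the pointwise bound from Lemma \ref{lem3.2}(a) must be promoted, via the explicit case analysis of Proposition \ref{prop2.1} and Lemma \ref{lem2.2}, to a $z$-integrable function that controls $|\partial_{x_i}P_s(x,z)\,\partial_t P_t f(z)|$ simultaneously for all $x$ in a compact set.
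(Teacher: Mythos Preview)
Your proposal uses the same ingredients as the paper (Lemmas \ref{lem3.1}, \ref{lem3.2}, the semigroup property, and the kernel bound from \cite{GHSTV}), and your treatment of \eqref{eq3.16} is essentially identical to the paper's. The difference lies in how \eqref{eq3.15} is justified.

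You proceed by Leibniz's rule and correctly identify the obstacle: one needs a $z$-integrable majorant for $|\partial_{x_i}P_s(x,z)\,\partial_tP_tf(z)|$ that is uniform over a compact $x$-set. Your proposed fix---extracting uniform $z$-decay of $P_{s/2}(x,z)$ from Proposition \ref{prop2.1} and Lemma \ref{lem2.2}---is not quite enough as stated, because the $K_1$ term has only polynomial $z$-decay, while the growth of $P_{t/2}|f|(z)$ in $z$ is not controlled a priori. This can be circumvented, but it requires more than the four-term decomposition alone.

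The paper sidesteps the uniform-majorant issue entirely by an integrate-then-differentiate trick. It first checks, exactly as you do, that for each fixed $x$ the double integral $\int\!\!\int|\partial_{x_i}P_s(x,y)||\partial_tP_t(y,z)||f(z)|\,dy\,dz$ is finite, bounded by $C(s)t^{-1}P_{(s+t)/2}|f|(x)$. Crucially, the \cite{GHSTV} kernel estimate makes this bound \emph{locally uniform} in $(x,t)$, so one may integrate the right-hand side of \eqref{eq3.15} over a rectangle $[x_i',x_i'']\times[t',t'']$ and apply Fubini, obtaining
\[
P_{s+t''}f(x'')-P_{s+t''}f(x')-P_{s+t'}f(x'')+P_{s+t'}f(x').
\]
Differentiating this in $x_i''$ and $t''$ (fundamental theorem of calculus) recovers \eqref{eq3.15}. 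The virtue of this route is that Fubini needs only integrability of the total expression, not a pointwise uniform dominant on the integrand; the obstacle you flagged simply does not arise.
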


\begin{proof}
We can assume $|x|< R$ for some $R>0$ and thus apply the estimates from
Lemma  \ref{lem3.2}.
First we verify the absolute convergence of the integral in \eqref{eq3.15},
by showing that
\begin{eqnarray*}
\int_{\rn}\int_{\rn} |\partial_{x_i}P_s(x,y)| |\partial_{t}P_t(y,z)||f(z)|\,dy\,dz
<\infty.
\end{eqnarray*}
 Lemmas \ref{lem3.2} and  \ref{lem3.1} imply that
this integral  is, up to a factor $C(n,R)$, no larger than
\begin{eqnarray*}
\frac{1+s^{-4-n}}{t}\int_{\rn}\int_{\rn} P_{s/2}(x,y) P_{t/2}(y,z)|f(z)|\,dy\,dz
= \frac{1+s^{-4-n}}{t}\int_{\rn} P_{(s+t)/2}(x,z)|f(z)|\,dz < \infty,
\end{eqnarray*}
where
 the equality comes from the semigroup property.  The last integral  here
is finite because of
 \eqref{cond:GHSTV}; indeed,  \cite[formula (6.4)]{GHSTV} says that
$P_t(x,y)$ is controlled by  $
e^{-|y|^2}/\sqrt{\ln(e+|y|)
}
$,  locally uniformly in $x$ and $t$.

Our next step consists in integrating the right-hand side of  \eqref{eq3.15}
along intervals in the variables $x_i$ and $t$. We choose two points
 $x',\,x''\in\rn$ with  $|x'|,\,|x''| < R$   which differ only in the
$i$:th coordinate, and also two points
$t',\,t''>0$. Fubini's theorem applies because of the above estimates, and
we get
\begin{eqnarray*}
&&\int_{x_i'}^{x_i''}\int_{t'}^{t''} \lf(\int_{\rn}\int_{\rn} \partial_{x_i}P_s(x,y)\partial_{t}p_t(y,z)f(z)\,dy\,dz\r)\,dt\,dx_i\\
&&\quad =  \int_{\rn} \int_{\rn} [P_s(x'',y)-P_s(x',y)]\,[P_{t''}(y,z)-P_{t'}(y,z)]f(z)\,dy\,dz\\
&&\quad = P_{s+t''}f(x'')-P_{s+t''}f(x')- P_{s+t'}f(x'')+P_{s+t'}f(x').
\end{eqnarray*}
From this, we obtain   \eqref{eq3.15} by differentiating with respect to
$x_i''$ and $t''$.

Finally,  \eqref{eq3.16} is a direct consequence of  \eqref{b}
and  \eqref{cond:GHSTV}.
\end{proof}

Proposition \ref{prop3.3} now allows us to apply the method of proof of
 \cite[Proposition~3.2]{LS} and obtain the same estimates as there.

\begin{cor}\label{cor:3.4}
Let $\az\in(0,1)$ and let $f\in\mathrm{GGLip}_\az$ with norm 1.
\begin{enumerate}
\item[\rm (i)] For all $i\in\{1,2,\dots,n\}$, $t>0$ and $x\in\rn$,
\begin{eqnarray*}
|\partial_{x_i} P_tf(x)|
\le C t^{\az-1}.
\end{eqnarray*}
\item[\rm (ii)] For all $t>0$ and $x=(x_1,0,\dots,0)\in\rn$ with $x_1\ge 0$,
\begin{eqnarray*}
|\partial_{x_1} P_tf(x)|
\le C t^{\az-2}(1+x_1)^{-1}.
\end{eqnarray*}
\end{enumerate}
\end{cor}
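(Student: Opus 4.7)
The plan is to adapt the method of proof of [LS, Proposition 3.2] to the present unbounded setting, with Proposition \ref{prop3.3} supplying the calculus identities that were automatic there. Normalizing so that $\|f\|_{\mathrm{GGLip}_\az}=1$, the very definition of the space gives the pointwise bound $|\partial_t P_t f(y)|\le t^{\az-1}$ for every $y\in\rn$ and $t>0$.

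The first step is to express $\partial_{x_i}P_s f(x)$ as an absolutely convergent integral of second-order derivatives. Indeed, by \eqref{eq3.16},
$$\partial_{x_i}P_s f(x)=-\int_s^\infty \partial_\tau\bigl[\partial_{x_i}P_\tau f(x)\bigr]\,d\tau.$$
For each $\tau>s$, identity \eqref{eq3.15} applied with the balanced split $s=t=\tau/2$, combined with the normalization above, yields
$$|\partial_\tau\partial_{x_i}P_\tau f(x)|\le (\tau/2)^{\az-1}\int_{\rn}|\partial_{x_i}P_{\tau/2}(x,y)|\,dy,$$
so that
$$|\partial_{x_i}P_s f(x)| \le \int_s^\infty (\tau/2)^{\az-1}\biggl(\int_{\rn}|\partial_{x_i}P_{\tau/2}(x,y)|\,dy\biggr)d\tau.$$

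The whole proof thus reduces to good control of $\int_{\rn}|\partial_{x_i}P_s(x,y)|\,dy$. For part (i), I would combine Proposition \ref{prop2.1} with Lemma \ref{lem2.2} and integrate each of $K_1,K_2,\wz{K_3},K_4$ over $y$ to obtain the uniform bound $\int_{\rn}|\partial_{x_i}P_s(x,y)|\,dy \ls s^{-1}$; plugging this into the display above and integrating $\tau^{\az-2}$ on $(s,\infty)$ produces the claimed $s^{\az-1}$.

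For part (ii), with $x=(x_1,0,\dots,0)$ and $x_1\ge 0$, one needs the sharper estimate
$$\int_{\rn}|\partial_{x_1}P_s(x,y)|\,dy \ls \frac{1}{s\bigl(1+s(1+x_1)\bigr)},$$
which, inserted in the above integral, delivers $s^{\az-2}(1+x_1)^{-1}$ whenever $s(1+x_1)\ge 1$; in the complementary range $s(1+x_1)<1$ one has $s^{\az-1}\le s^{\az-2}(1+x_1)^{-1}$, so part (i) already suffices. I expect this refined kernel bound to be the main obstacle: the additional decay in $x_1$ must be extracted from the factor $\exp^\ast(-s(1+|x|))$ inside $K_1$, from the restricted supports and exponential weights in $K_2$ and $K_4$, and crucially from the sharpened form $\min\{1,\,s[\ln(e+|x|)]^{-1/2}\}$ of $\wz{K_3}$ furnished by Lemma \ref{lem2.2}---which is precisely why that sharpening was proved.
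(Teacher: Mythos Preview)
Your overall framework—writing $\partial_{x_i}P_sf(x)=-\int_s^\infty \partial_\tau\partial_{x_i}P_\tau f(x)\,d\tau$ via \eqref{eq3.16}, then feeding in \eqref{eq3.15} together with the hypothesis $|\partial_t P_t f|\le t^{\alpha-1}$—is exactly the scheme the paper imports from \cite[Proposition~3.2]{LS}, and part~(i) goes through as you describe.

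For part~(ii), however, there is a genuine gap. The kernel bound you propose,
\[
\int_{\rn}|\partial_{x_1}P_s(x,y)|\,dy \ls \frac{1}{s\bigl(1+s(1+x_1)\bigr)},
\]
cannot be obtained from Proposition~\ref{prop2.1} and Lemma~\ref{lem2.2}. The term $\wz{K_3}$ alone already integrates over $y$ to a quantity of order $\min\{1,\,s[\ln(e+x_1)]^{-1/2}\}$, so that $s^{-1}\int\wz{K_3}\,dy$ is only $\simeq s^{-1}$ when $s$ is large and $x_1$ is bounded; this is nowhere near the $s^{-2}(1+x_1)^{-1}$ you need. The $K_j$ are upper bounds for $|t\partial_{x_i}P_t|$ and are far from sharp in this regime, so routing the argument through them fails. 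Your attribution of Lemma~\ref{lem2.2} as ``precisely why that sharpening was proved'' is also misplaced: that lemma is invoked in Section~\ref{sect4}, in the implication (ii)$\Rightarrow$(i) of Theorem~\ref{thm1.2}, where the logarithmic growth of $f$ has to be absorbed against $\wz{K_3}$---not here in Corollary~\ref{cor:3.4}.

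The bound you want on $\int|\partial_{x_1}P_s(x,y)|\,dy$ is in fact true (and much stronger for large $s$), but one must return to the explicit integral representation \eqref{xj} and estimate directly, as in the proof of \cite[Proposition~3.2]{LS}; the extra factor $e^{-s}(y_1-e^{-s}x_1)/(1-e^{-2s})$ there supplies decay that the coarse $K_j$ envelope throws away.
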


\section{Proof of Theorem \ref{thm1.2}}\label{sect4}

$\rm (i) \Longrightarrow \rm (ii)$:    
We assume that $f$ satisfies  \eqref{cond:GHSTV}
and \eqref{GLip}.
   According to \cite[Theorem~1.1]{GHSTV},  $P_tf(x)\to f(x)$ as $t\to 0$
for a.a. $x\in\rn$, and thus we can modify $f$ on a null set so that
 this convergence holds for all $x$.

Now fix  $x,\,y\in\rn$. For all $t>0$, we write
\begin{eqnarray}\label{eq4.1}
|f(x)-f(y)|\le  |f(x)-P_{t}f(x)|+\lf| P_{t}f(x)-P_{t}f(y)\r|+
\lf|P_{t}f(y)-f(y)\r|.
 \end{eqnarray}
 Using Corollary \ref{cor:3.4} (i) and
 arguing as in the verification of \cite[formula (3.7)]{LS}, we
 get
 \begin{eqnarray}\label{eq4.2}
|f(x)-f(y)|\ls |x-y|^\az.
 \end{eqnarray}
To obtain  \eqref{eq:gglip}, it is then enough to
prove that
$$
|f(x)-f(y)| \ls d(x,y_x)^{\frac{\az} 2} + |y_x'|^\alpha.
$$
By writing
$$
|f(x)-f(y)|\le |f(x)-f(y_x)|+ |f(y_x)-f(y)|
$$
and applying \eqref{eq4.2} to the last term here, we see that we need only
 verify that
$$
|f(x)-f(y_x)| \ls  d(x,y_x)^{\frac{\az} 2}.
$$
Making a  rotation, we can assume that  $x=(x_1,0,\dots,0)$ with $x_1\ge0$
and  $y_x=(y_1,0,\dots,0)$.

We estimate $|f(x)-f(y_x)|$ as in  \eqref{eq4.1}. Of the three terms we then get,  the first and
third are controlled by $t^\alpha$. To the second term, we apply Corollary
\ref{cor:3.4} (ii) and the one-dimensional integral expression  \eqref{d}
for $d$. As a result,
$$
|f(x)-f(y_x)| \ls t^\alpha + t^{\alpha-2} d(x,y_x),
$$
and here we choose $t =d(x,y_x)^{1/2} $. This leads to  \eqref{eq:gglip},
and the implication  $\rm (i) \Longrightarrow \rm (ii)$    is proved.

\medskip

\noindent $\rm (ii) \Longrightarrow \rm (i)$:
Letting $y=0$, we see that \eqref{eq:gglip}
implies that
$f(x) = O((\ln|x|)^{\alpha/2})$ as $|x| \to \infty$  and thus also
 \eqref{cond:GHSTV}.
We must verify
\eqref{GLip}.

Using  the fact that  $\int_\rn \partial_tP_t(x,y)\,dy=0$ and
Lemma \ref{lem2.2}, we can write
\begin{eqnarray*}    
|t\partial_tP_tf(x)|
&&= \lf|\int_\rn t \partial_t P_t(x,y)[f(y)-f(x)]\,dy\r|\\
&&\ls  \int_\rn [K_1(t,x,y)+K_2(t,x,y)+ \wz{K_3}(t,x,y)+K_4(t,x,y)] |f(y)-f(x)|\,dy \notag.
\end{eqnarray*}

We thus get four integrals to control by  $t^\alpha$.
For $\int_{\rn} K_1(t,x,y) |f(y)-f(x)|\,dy $, we can apply the same simple argument
as in  \cite[end of Section 3]{LS}, since it uses only the quantity
$|x-y|^\az$ in \eqref{eq:gglip}.

The integral involving $K_2(t,x,y)$ can
also be estimated as in   \cite{LS}, because  \eqref{comp} applies in
the support of  $K_2(t,x,y)$.

For the integral with $\wz{K_3}(t,x,y)$, we apply the inequality
$(a+b)^\kz \le a^\kz + b^\kz$ with $a,b >0$ and $\kz = \az/2 \in (0,1)$ to the expression in
  \eqref{eq:gglip} and get
\begin{eqnarray*}
&&\int_\rn \wz{K_3}(t,x,y) |f(y)-f(x)|\,dy\\
&& \quad \ls  \min\lf\{1,\,\frac{t}{\sqrt{\ln(e+|x|)}}\r\}\int_{\rn}
\lf(\lf(\ln(1+|x|)\r)^{\frac{\az} 2}+\lf(\ln(1+|y_x|)\r)^{\frac{\az} 2}
+|y_x'|^\alpha\r)
\exp^\ast(-|y|^2)\,dy\notag
\end{eqnarray*}
The minimum here is no larger than $t^\az/[\ln(e+|x|)]^{\frac{\az} 2}$,
which leads immediately to the bound $t^\az$ for the whole expression.

Finally,
\begin{eqnarray}\label{eq4.3}
\qquad\int_\rn K_4(t,x,y)|f(y)-f(x)|\,dy
&& \le  \int_{\gfz{x \cdot y>0}{1<|y_x|<|x|/2}}
\frac t{|y_x|} \left(\ln \frac {|x|} {|y_x|}\right)^{ -\frac32}  \,
\exp^\ast\left(-\frac {t^2}{\ln \frac  {|x|} {|y_x|}}\right) \,\\
&&\quad\times
 \exp^\ast(-|y_x^\prime |^2)
\lf(\lf[\ln(1+|x|)-\ln(1+|y_x|)\r]^{\frac{\az} 2}
+|y_x'|^\alpha\r)dy\notag
\end{eqnarray}
When $1<|y_x|<|x|/2$, we have
$$
|\ln(1+|x|)-\ln(1+|y_x|)|=\ln\frac{1+|x|}{1+|y_x|} \simeq \ln\frac{|x|}{|y_x|}.
$$
 After a rotation, we can assume that $x = (x_1,0,\dots,0)$ with $x_1>0$,
so that  $y_x = (y_1,0,\dots,0)$ and $y'_x = (0,y') $ and we have
  $1<y_1<x_1/2$.
 The right-hand integral in
\eqref{eq4.3} is bounded by a constant times
$$
\int_1^{x_1/2} \int_{\rr^{n-1}}
\frac t{y_1} \left(\ln \frac {x_1} {y_1}\right)^{ -\frac32}  \,
\exp^\ast\left(-\frac {t^2}{\ln \frac  {x_1} {y_1}}\right)
\exp^\ast(-|y^\prime |^2) \lf(\lf[\ln\frac{x_1}{y_1}\r]^{\frac{\az} 2}
+|y'|^\alpha\r)\,dy'\,dy_1.
$$
Integrating  in $y'$ and noticing that $\ln\,(x_1/{y_1})\gs 1$,
we can control this double integral by
$$
\int_1^{x_1/2}
\frac t{y_1} \left(\ln \frac {x_1} {y_1}\right)^{ \frac{\az} 2-\frac32}  \,
\exp^\ast\left(-\frac {t^2}{\ln \frac  {x_1} {y_1}}\right)
\,dy_1.
$$
The transformation of variable $s = t^{-2}(\ln {x_1} - \ln {y_1})$ now
gives the desired bound $t^\az$.

Summing up, we have verified \eqref{GLip} and $\rm (i)$.
The norm equivalence \eqref{norm} also follows, and
this ends the proof of
Theorem \ref{thm1.2}.


\section{An example of a function in $\mathrm{GGLip}_\az$}\label{sect5}

With $\az\in(0,1)$,
we consider the function

$$f(x)=[\ln(e+|x|)]^{\az/2},\qquad x\in\rn.$$
We shall verify that $f$ belongs to $\mathrm{GGLip}_\az$,
using Theorem \ref{thm1.2}.

The estimate
\begin{equation}\label{eq5.1}
|f(x)-f(y)| \ls |x-y|^\az
\end{equation}
is easy and left to the reader.

To show that
\begin{equation}\label{eq5.2}
|f(x)-f(y)|\ls \lf|\ln(e+|x|)- \mathrm{sgn} \langle x,y\rangle  \ln(e+|y_x|)\r|^{\frac{\az} 2}
+|y_x'|^\alpha,
\end{equation}
 write
$$
|f(x)-f(y)|
\le |f(x)-f(y_x)|+|f(y_x)-f(y)|.
$$
The last term here is controlled by $|y_x'|^\alpha$, because of \eqref{eq5.1}.
To the first term on the right, we apply the inequality  $|a^\kz-b^\kz|\le |a-b|^\kz,  \; \; a,b>0$, with  $\kz = \az/2 \in(0,1)$, getting
$$
|f(x)-f(y_x)|
= \lf|[\ln(e+|x|)]^{\az/2}-[\ln(e+|y_x|)]^{\az/2} \r|
\le \lf|\ln(e+|x|)-\ln(e+|y_x|)\r|^{\frac{\az} 2}.
$$
This implies \eqref{eq5.2}, and it follows that
$f\in \mathrm{GGLip}_\az$.


\medskip

\noindent {\sc Liguang Liu\,}

\noindent  Department of Mathematics,
School of Information\\
Renmin University of China\\
Beijing 100872\\
China

\noindent {\it E-mail}: \texttt{liuliguang@ruc.edu.cn}

\bigskip

\noindent {\sc Peter Sj\"ogren\,}

\noindent  Mathematical Sciences,
University of Gothenburg\\ and\\
Mathematical Sciences, Chalmers\\
SE-412 96  G\"oteborg\\
Sweden

\noindent {\it E-mail}: \texttt{peters@chalmers.se}

\end{document}